\newcommand{\bel}{\begin{equation}\label}
\newcommand{\ee}{\end{equation}}
      \newtheorem{theorem}{Theorem}[section]
       \newtheorem{proposition}[theorem]{Proposition}
       \newtheorem{corollary}[theorem]{Corollary}
       \newtheorem{lemma}[theorem]{Lemma}
       \newtheorem{remark}{Remark}[section]
\theoremstyle{definition}
\def\eps{\varepsilon}
\def\R{{\mathbb R}}
\def\N{{\mathbb N}}
\def\<{\langle}
\def\>{\rangle}
\def\P{\mathbb P}
\def\E{\mathbb E}
\def\eps{\epsilon}
\def\0{\underline 0}
\def\a{\underline a}
\def\1{\underline 1}
\def\cF{\mathcal F}
\def\be*{\begin{equation*}}
\def\ee*{\end{equation*}}
\def\bar*{\begin{eqnarray*}}
\def\ear*{\end{eqnarray*}}
\def\bel{\begin{equation}\label}
\def\ee{\end{equation}}
\title{From invariance under  binomial thinning\\ to unification of the Cauchy and the Go\l \k ab-Schinzel-type equations}
\author{Karol Baron\thanks{University of Silesia, Katowice, Poland; e-mail: karol.baron@us.edu.pl}\hspace{1mm} and Jacek Weso\l owski\thanks{Warsaw University of Technology, Warszawa, Poland; e-mail: wesolo@mini.pw.edu.pl}}
\begin{document}
\maketitle
\begin{abstract}
	We point out to a connection between a problem of invariance of power series families of probability distributions under binomial thinning and functional equations which generalize both the Cauchy and an additive form of the Go\l \k ab-Schinzel equation. We solve these equations in several settings with no or mild regularity assumptions imposed on unknown functions.
\end{abstract}

\noindent
{\bf Keywords:} Cauchy equation, Go\l \k ab-Schinzel equation, binomial thinning, power series family 

\section{Introduction: invariance under binomial thinning in power series families}
Functional equations we analyze in this paper arise naturally in an invariance problem involving Poisson-type random probability measures, known under the nickname {\em throwing stones and collecting bones}. The problem has two basic ingredients: the binomial thining operator and the power series family of probability distributions. 

\begin{enumerate}
	\item Binomial thinning: 
	
Let $\mathcal P(\N)$ be the set of probability measures with supports in $\N=\{0,1,\ldots\}$. For every $p\in[0,1]$ the binomial thining operator $T_p$ is defined as follows:
$$
\mathcal P(\N)\ni \mu\mapsto T_p(\mu)\in\mathcal P(\N)
$$
and $T_p(\mu)$ is the probability distribution of  
\bel{Ktil}
\tilde{K}:=\sum_{n=0}^K\,I_n,
\ee
where the sequence $(I_n)_{n\ge 1}$  of independent random variables with the same Bernoulli distribution $\mathrm{Ber}(p):=(1-p)\delta_0+p\delta_1$ (additionally we denote $I_0=0$) and the random variable $K$ with distribution $\mu$ (defined  on some probability space $(\Omega,\cF,\P)$)  are independent. Here and in the sequel by $\delta_x$ we denote the Dirac measure at $x$. In particular, $T_0(\mu)=\delta_0$  and $T_1(\mu)=\mu$.  

This operator was introduced in \cite{SvH} to establish discrete versions of stability and selfdecomposability of probability measures. Since then binomial thinning operator and its extensions have been intensively studied in various probabilistic contexts (a prominent example being the time series theory). In particular, very recently \cite{BR} (referred to by BR in the sequel) used the thinning operator to model Poisson-type random point processes restricted to a subset of the original state space. 

\item Power series family: 

Let $\a=(a_k)_{k\ge 0}$ be a sequence of nonnegative numbers with $a_0=1$  such that the set
$$
\Theta_{\a}=\left\{\theta\ge 0:\,\varphi(\theta):=\sum_{k\ge 0}\,a_k\theta^k<\infty\right\}
$$
has a non-empty interior (actually $\Theta_{\a}$ is a convex set). Then $$\mu_{\a,\theta}=\sum_{k\ge 0}\,\tfrac{a_k\theta^k}{\varphi(\theta)}\,\delta_k$$ is a probability measure  called a power series distribution generated by $\a$ with the parameter $\theta\in\Theta_{\a}$. The power series family generated by $\a$ is defined as $$\mathcal{PSF}(\a)=\{\mu_{\a,\theta}:\,\theta\in\Theta_{\a}\}.$$ 
\end{enumerate}

The problem lies in identification of  power series families which are  invariant under binomial thinning, i.e.  one searches for  $\mathcal{PSF}(\a)$ satisfying 
$$
T_p(\mathcal{PSF}(\a))\subset \mathcal{PSF}(\a)
$$
for some $p\in (0,1)$. 

Equivalently, we want to describe all sequences $\a$ of nonnegative numbers with $a_0=1$ and with  $\Theta_{\a}$ of non-empty interior, such that there exists $p\in(0,1)$ and a function $h_p:\Theta_{\a}\to\Theta_{\a}$ which satisfy the condition  
\bel{KTK}
\forall\,\theta\in\Theta_{\a}\quad\left(\, K\sim \mu_{\a,\theta}\quad \Rightarrow\quad \tilde{K}\sim\mu_{\a,h_p(\theta)}\,\right),
\ee
where $\tilde{K}$ is defined in \eqref{Ktil}. (If $\mu$ is the probability distribution of a random variable $X$ we write $X\sim \mu$.)

Probability generating function is a convenient tool to analyze this problem. Recall that the probability generating function $\psi_{\mu}$ of $\mu=\sum_{k\ge 0}\,p_k\delta_k\in \mathcal{P}(\N)$ is defined by
$\psi_{\mu}(s)=\sum_{k\ge 0}\,s^kp_k$ on a domain $U\supset[-1,1]$. In particular, $$\psi_{\mathrm{Ber}(p)}(s)=ps+q,\quad s\in\R,\quad \mbox{where }\;q=1-p,$$ and 
$$
\psi_{\mu_{\a,\theta}}(s)=\sum_{k\ge 0}\,s^k\,\tfrac{a_k\theta^k}{\varphi(\theta)}=\tfrac{\varphi(s\theta)}{\varphi(\theta)},\qquad |s|\theta\in\Theta_{\a},\;\theta\in\Theta_{\a},
$$
where $\varphi$ is defined on $\Theta_{\a}\cup(-\Theta_{\a})$ by analytical extension.

Let $\nu$ be the distribution of $\tilde{K}$ defined in \eqref{Ktil} with $K\sim\mu_{\a,\theta}$ for $\theta\in\Theta_{\a}$. Then using conditioning with respect to $K$ and independence of $K,I_1,I_2,\ldots$ we get
$$
\psi_{\nu}(s)=\E\,s^{\tilde{K}}=\E\,s^{\sum_{n=0}^K\,I_n}=\E\,(\psi_{\mathrm{Ber}(p)}(s))^K=\psi_{\mu_{\a,\theta}}\left(\psi_{\mathrm{Ber}(p)}(s)\right)=\tfrac{\varphi((ps+q)\theta)}{\varphi(\theta)},
$$
if only  $\theta,\,|ps+q|\theta\in \Theta_{\a}$.

By \eqref{KTK} we have $\nu=\mu_{\a,h_p(\theta)}$, i.e. for $p\in(0,1)$ we get the equation
\bel{row11}
\tfrac{\varphi((ps+q)\theta)}{\varphi(\theta)}=\tfrac{\varphi(sh_p(\theta))}{\varphi(h_p(\theta))}
\ee
for $s$ and $\theta$ satisfying  $\theta,\,|ps+q|\theta,\,|s|h_p(\theta)\in \Theta_{\a}$. Since $q\theta\in\Theta_{\a}$, upon inserting  $s=0$  in \eqref{row11},  we get (note that $\varphi(0)=1$)
$$
\varphi(h_p(\theta))=\tfrac{\varphi(\theta)}{\varphi(q\theta)},\quad \theta\in\Theta_{\a}.
$$
Consequently, \eqref{row11} can be rewritten as
\bel{basica}
\varphi((ps+q)\theta)=\varphi(q\theta)\,\varphi(sh_p(\theta)).
\ee
Then, upon changing variables  $u:=ps\theta$, $v:=q\theta$ the equation   \eqref{basica} yields 
\bel{proto}
\varphi(u+v)=\varphi(v)\,\varphi(u\rho(v))
\ee
on the proper domain for variables $u$ and $v$ (actually, this domain contains a  neighbourhood of zero for $u$ and a right neighbourhood of zero for $v$), where $\rho(0)=1$ and $\rho(v)=\tfrac{q}{pv}h_p\left(\tfrac{v}{q}\right)$, $v>0$. Since $\varphi(0)=1$ one can apply the logarithm to both sides of \eqref{proto} for $u$ in a two-sided neighbourhood of zero and $v$ in the right-neighbourhood of zero which leads to an additive version of this equation \eqref{proto}. 
Such equation, referred to by the {\em modified Cauchy equation} in BR, has been recently solved in that paper. We quote now this result {\em in extenso}:
\begin{lemma}[BR, Lemma 1]\label{BaRe}  Assume that $f(t)$ is twice differentiable in some neighbourhood of the origin, satisfies $f(0)=0$ and $f'(0)>0$ as well as
$$
f(s+t)-f(s)=f(h(s)t),
$$
where $h(s)$ is $t$ free. Then $f$ is of the form $f(t)=At$ or $f(t)=B\log(1+At)$ for some $A,B\neq 0$. Moreover $h(s)=f'(s)/f'(0)$.
\end{lemma}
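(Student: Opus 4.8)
The plan is to exploit the twice-differentiability of $f$ to convert the functional equation into an ordinary differential equation for $f'$. First I would differentiate $f(s+t)-f(s)=f(h(s)t)$ with respect to $t$, obtaining $f'(s+t)=h(s)\,f'(h(s)t)$, and then set $t=0$. Since $f(0)=0$ this gives $f'(s)=h(s)f'(0)$, i.e. $h(s)=f'(s)/f'(0)$, which is exactly the asserted formula for $h$ and, crucially, shows that $h$ inherits differentiability from $f'$ (this is where the second derivative of $f$ is needed). Writing $c:=f'(0)>0$, we note $f'(s)=c\,h(s)$ and $h(0)=1$, so both $f'$ and $h$ stay positive on a neighbourhood of the origin, which will legitimize the divisions below.

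Next I would differentiate the same equation with respect to $s$, now permissible because $h$ is differentiable, getting $f'(s+t)-f'(s)=f'(h(s)t)\,h'(s)\,t$. Subtracting this from the relation $f'(s+t)=h(s)f'(h(s)t)$ cancels the awkward term $f'(s+t)$ and yields $f'(h(s)t)\,[\,h(s)-h'(s)t\,]=f'(s)$. The decisive manoeuvre is the substitution $u:=h(s)t$: for each small fixed $s$ the factor $h(s)>0$ lets $u$ range over a full neighbourhood of $0$, so after dividing by $h(s)$ and using $f'(s)=c\,h(s)$ the identity becomes $f'(u)\,[\,1-g(s)\,u\,]=c$, where $g(s):=h'(s)/h(s)^{2}$.

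Finally I would separate the variables. Fixing a single $u_{0}\neq 0$ near the origin (where $f'(u_{0})\neq0$) forces $g(s)=\tfrac1{u_{0}}\bigl(1-c/f'(u_{0})\bigr)$ to be independent of $s$, so $g(s)\equiv g$ is constant and $f'(u)=c/(1-gu)$ on a neighbourhood of $0$. Integrating with $f(0)=0$ then splits into exactly two cases: if $g=0$ we obtain $f(t)=ct=At$ with $A=f'(0)\neq0$, and if $g\neq0$ we obtain $f(t)=-\tfrac{c}{g}\log(1-gt)=B\log(1+At)$ with $A=-g\neq0$ and $B=c/A\neq0$. I expect the main obstacle to be the bookkeeping in the separation step, namely justifying that $s$ and $u=h(s)t$ may be treated as independent variables over a common neighbourhood, so that $g$ is genuinely constant rather than merely constant along the curve $u=h(s)t$; once that point is made clean, both closed forms and the formula for $h$ follow at once.
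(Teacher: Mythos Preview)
Your argument is correct. Differentiating in $t$ and setting $t=0$ gives $h(s)=f'(s)/f'(0)$ and hence the differentiability of $h$; the mixed differentiation followed by the substitution $u=h(s)t$ then cleanly produces $f'(u)\bigl(1-g(s)u\bigr)=c$ with $g(s)=h'(s)/h(s)^{2}$, and your separation-of-variables step is legitimate because for $s$ in a small neighbourhood of $0$ one has $h(s)$ close to $1$, so fixing any small $u_{0}\neq 0$ and letting $s$ vary still keeps $t=u_{0}/h(s)$ inside the original neighbourhood; hence $g$ is genuinely constant and the integration yields the two stated forms.

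There is, however, nothing to compare against in this paper: Lemma~\ref{BaRe} is merely quoted \emph{in extenso} from \cite{BR} and is not proved here. The authors cite it only to motivate their own study of the equations \eqref{equ} and \eqref{rew} under much weaker regularity hypotheses (right continuity at a point, or none at all), and their methods---iteration arguments, Dini derivatives, reduction to additive functions---are designed precisely to avoid the twice-differentiability assumption that your ODE approach exploits. So your proof is a valid reconstruction of the BR argument, but it is methodologically orthogonal to what the present paper does.
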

In BR  Lemma \ref{BaRe} is used to identify  Poisson, binomial and negative binomial probability distributions as the only power series families which are invariant under binomial thinning (see Theorem 2 and its proof in BR).  

We are interested in the above equation as well as its "dual" $f(s+g(s)t)=f(s)+f(t)$. Instead of a neighbourhood of zero we consider domains: $[0,\infty)$ in Section 3 and $V$, a vector space, in Section 4. We assume minor or no regularity conditions on $f$ and consider several cases of image spaces of $f$: a unital magma, the real line and a linear topological space. No regularity conditions whatever are imposed on the unknown functions $h$ and $g$. Our results complete to some extent \cite{JCh} where all solutions of the "dual" equation are determined in the case when $f$ maps a linear space (real or complex) into a semigroup and $g$ satisfies some regularity conditions.  In Section 2 we give preliminaries on the functional equations we are interested in.

\section{Cauchy-Go\l \k ab-Schinzel equations}

Let $(M,+)$ be a magma, i.e.  $M$ is a set equipped with a binary operation $+:M\times M\to M$. Consider $U=[0,\infty)$ or $U=V$, a vector space over a field $\mathbb F$. For unknown functions $f:U\to M$ and $g,h:U\to W$, where $W=U$ in case $U=[0,\infty)$ and $W=\mathbb F$ in case $U=V$,  we consider  equations 
\bel{equ}
f(s+t)=f(s)+f(h(s)t),\qquad s,t\in U,
\ee
and
\bel{rew}
f(s+g(s)t)=f(s)+f(t),\qquad s,t\in U.
\ee
We would like to identify all solutions $(f,h)$ of \eqref{equ} and $(f,g)$ of \eqref{rew}. 

If $f=g$ (thus $M=W$), then \eqref{rew} becomes an additive form of the Go\l \k ab-Schinzel equation, see \cite{JA}, pp. 132-135, \cite{AD}, pp. 311-319 and the survey paper \cite{JB }. For more recent contributions on the Go\l \k ab-Schinzel equation and its generalizations consult e.g. \cite{CK14}, \cite{CK15}, \cite{CK17}, \cite{BO}  and \cite{O17}. In particular, the latter paper reveals yet another probabilistic (stable laws and random walks) connection  of the Go\l \k ab-Schinzel equation, treated there as a {\em disguised form} of the  Goldie equation. If $g\equiv 1$ or $h\equiv 1$, then  \eqref{equ} and \eqref{rew} become the same standard Cauchy equation.  Hence we call \eqref{rew} as well as \eqref{equ} the Cauchy-Go\l \k ab-Schinzel (CGS) equations. 

We consider unital magma $M$ (i.e. $M$ has a neutral element, denoted by $\bf 0$ throughout the paper) with  the two-sided cancelation property. To avoid trivialities we  assume that $f\not\equiv \bf 0$.

Note that for $f$ which solves either \eqref{equ} or \eqref{rew} we have
\bel{f0}
f(0)={\bf 0}.
\ee

\vspace{3mm}
\begin{remark}
	\label{rem1}
	If $(f,g)$ solves \eqref{rew}, then $\mathrm{Ker}(g):=\{s\in U:\,g(s)=0\}=\emptyset$.
	
	Assume not, i.e. $g(s_0)=0$ for some $s_0\in U$. Then \eqref{rew} implies $f(s_0)=f(s_0+g(s_0)t)=f(s_0)+f(t)$ for any $t\in U$. Hence $f\equiv \mathbf 0$, a contradiction.
\end{remark}

\begin{remark}\label{rem3}
	If  $(f,h)$ solves \eqref{equ} and $\mathrm{Ker}(h)=\emptyset$, then $(f,g)$ with $g=1/h$  solves \eqref{rew}. 
	In the opposite direction, if $(f,g)$    solves \eqref{rew}, then  $(f,h)$ with $h=1/g$ (being well-defined by Remark \ref{rem1}) solves \eqref{equ}.
\end{remark}

\begin{remark}\label{rem2}
Let $(f,h)$ solves \eqref{equ} for $U=V$, a vector space. Then $\mathrm{Ker}(h)=\emptyset$. 

Assume not,   i.e. $h(s_0)=0$ for some $s_0\in V$. Then \eqref{equ}  together with \eqref{f0} imply $f(s_0+t)=f(s_0)$ for every $t\in V$. That is, $f$ is a constant function. By \eqref{f0} we get a contradiction with $f\not\equiv {\bf 0}$.
\end{remark} 

When $U=[0,\infty)$, while considering \eqref{equ} it is convenient to distinguish two cases with respect to the form of the kernel of $h$: 
\begin{enumerate}
	\item[{\bf I}] $\mathrm{Ker}(h)\neq\emptyset$
	\item[{\bf II}] $\mathrm{Ker}(h)=\emptyset$
\end{enumerate}

\section{CGS equations on $U=[0,\infty)$}

\subsection{$\mathbf{\mathrm{\mathbf{Ker}}(h)\neq\emptyset}$}
Throughout this section we assume that $U=[0,\infty)$ and that  $(M,+)$ is a unital magma with the two-sided cancelation property.

\begin{theorem} 
Assume that $f:[0,\infty)\to M$ is non-zero, $h:[0,\infty)\to[0,\infty)$, $\mathrm{Ker}(h)\neq\emptyset$ and $$s_0=\inf\,\mathrm{Ker}(h).$$

Then $(f,h)$ solves \eqref{equ} on $[0,\infty)$ if and only if  
\begin{enumerate} \item 
	either $s_0=0$ and  
	\bel{fg1}
	f(s)=\left\{\,\begin{array}{ll} 
		\mathbf 0,& \mathrm{for}\;s=0, \\
		\mathbf a, & \mathrm{for}\;s\in(0,\infty),
	\end{array}\right. \qquad 	
	h(s)=\left\{\,\begin{array}{ll} 
		b,& \mathrm{for}\;s=0, \\
		0, & \mathrm{for}\;s\in(0,\infty),
	\end{array}\right.
	\ee
	where $\mathbf a\in M\setminus\{\mathbf 0\}$ and  $b\in(0,\infty)$,
	\item or  $s_0>0$  and
	\bel{fg2}
	f(s)=\left\{\,\begin{array}{ll} 
		\mathbf 0,& \mathrm{for}\;s\in[0,s_0), \\
		\mathbf a, & \mathrm{for}\;s\in[s_0,\infty),
	\end{array}\right. \qquad 	
	h(s)=\left\{\,\begin{array}{ll} 
		\tfrac{s_0}{s_0-s},& \mathrm{for}\;s\in[0,s_0), \\
		0, & \mathrm{for}\;s\in[s_0,\infty),	\end{array}\right.
	\ee
	where $\mathbf a\in M\setminus\{\mathbf 0\}$.
	\end{enumerate}
\end{theorem}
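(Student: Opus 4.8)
The plan is to prove the equivalence by checking the \emph{if} part through direct substitution and then deriving the stated forms in the \emph{only if} part. For the \emph{if} part I would simply insert \eqref{fg1} and \eqref{fg2} into \eqref{equ}. The only substantial point occurs in \eqref{fg2} for $s\in[0,s_0)$: since $h(s)=\tfrac{s_0}{s_0-s}>0$, one has $s+t<s_0$ exactly when $h(s)t<s_0$ (both amount to $t<s_0-s$), so both sides of \eqref{equ} pass from $\mathbf 0$ to $\mathbf a$ at the same threshold $t=s_0-s$; for $s\ge s_0$ we have $h(s)=0$, hence by \eqref{f0} both sides equal $\mathbf a$. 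The form \eqref{fg1} is the degenerate instance $s_0=0$, where the interval $[0,s_0)$ is empty. The first structural observation for the converse is that if $h(s)=0$ then \eqref{equ} collapses to $f(s+t)=f(s)$ for all $t\ge 0$, so $f$ is constant on $[s,\infty)$; since the half-lines attached to points of $K:=\mathrm{Ker}(h)$ are nested, $f$ takes a single value $\mathbf a$ on $K$, and from $s_0=\inf K$ it follows that $f\equiv\mathbf a$ on $(s_0,\infty)$.

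Next I would pin $f$ down off $(s_0,\infty)$. For $s\in[0,s_0)$ we have $h(s)>0$, so choosing $t$ so large that both $s+t>s_0$ and $h(s)t>s_0$ turns \eqref{equ} into $\mathbf a=f(s)+\mathbf a$, and right cancellation gives $f(s)=\mathbf 0$; thus $f\equiv\mathbf 0$ on $[0,s_0)$. To see $\mathbf a\neq\mathbf 0$, suppose $\mathbf a=\mathbf 0$: then $f$ vanishes off the single point $s_0$ and $f(0)=\mathbf 0$, and if $s_0>0$ then \eqref{equ} at $s=s_0$ with small $t>0$ forces $f(h(s_0)t)=\mathbf 0$ and hence $f(s_0)=\mathbf 0$ as well; either way $f\equiv\mathbf 0$, a contradiction.

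With $\mathbf a\neq\mathbf 0$ secured, I would identify $h$. For $s>s_0$ the equation reads $\mathbf a=\mathbf a+f(h(s)t)$, so left cancellation yields $f(h(s)t)=\mathbf 0$ for all $t$; were $h(s)>0$ this would force $f\equiv\mathbf 0$ on $(0,\infty)$, contradicting $\mathbf a\neq\mathbf 0$, so $h\equiv 0$ on $(s_0,\infty)$. For $s\in[0,s_0)$ the relation $f(s)=\mathbf 0$ reduces \eqref{equ} to the clean identity $f(s+t)=f(h(s)t)$ valid for all $t\ge 0$. Here I would compare thresholds: the left side equals $\mathbf 0$ for $t<s_0-s$ and $\mathbf a$ for $t>s_0-s$, while the right side equals $\mathbf 0$ for $t<s_0/h(s)$ and $\mathbf a$ for $t>s_0/h(s)$. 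Since $\mathbf a\neq\mathbf 0$, any discrepancy $s_0-s\neq s_0/h(s)$ would produce a value of $t$ on which the two sides disagree, whence $h(s)=\tfrac{s_0}{s_0-s}$.

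It remains to settle the endpoint $s_0$, which is precisely where the dichotomy between \eqref{fg1} and \eqref{fg2} originates. If $s_0=0$, then $0\notin K$ (otherwise $f$ would be constant $\mathbf 0$ on $[0,\infty)$), so $h(0)=:b>0$ while $f(0)=\mathbf 0$, giving \eqref{fg1}. If $s_0>0$, evaluating \eqref{equ} at $s=s_0$ with small $t>0$ gives $f(s_0)=\mathbf a$; if moreover $h(s_0)>0$, then large $t$ yields $\mathbf a=\mathbf a+\mathbf a$, forcing $\mathbf a=\mathbf 0$, a contradiction, so $h(s_0)=0$, which is \eqref{fg2}. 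I expect the main obstacle to be the threshold-matching step that produces the exact value $h(s)=\tfrac{s_0}{s_0-s}$ together with the boundary analysis at $s_0$: because $M$ is only a cancellative unital magma, every deduction must be phrased through left and right cancellation rather than subtraction, and the value of $f$ at the single point $s_0$ has to be extracted from the equation rather than read off by any continuity or limiting argument.
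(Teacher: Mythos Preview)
Your proof is correct and follows essentially the same route as the paper's: constancy of $f$ on $(s_0,\infty)$ from points of $\mathrm{Ker}(h)$, the implication $h(s)\neq 0\Rightarrow f(s)=\mathbf 0$ via large $t$, then $h\equiv 0$ on $(s_0,\infty)$ and the threshold-matching identity $f(s+t)=f(h(s)t)$ to pin down $h(s)=\tfrac{s_0}{s_0-s}$ on $[0,s_0)$. The only organizational difference is that you establish $\mathbf a\neq\mathbf 0$ once, upfront, whereas the paper verifies it separately in each of the two cases $s_0=0$ and $s_0>0$; and you phrase the threshold step as a direct contradiction while the paper writes it as an $\varepsilon$-squeeze, but these are the same argument.
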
	
	
	\begin{proof}
		It is easy to check that $(f,h)$ given in \eqref{fg1} and \eqref{fg2} solve \eqref{equ}.
		
		Let $(f,h)$ solve \eqref{equ}. Then it follows from \eqref{equ} and \eqref{f0} that for any $t\in[0,\infty)$,
		 $$
		 f(s+t)=f(s)\qquad\mbox{if only }\;h(s)=0.
		 $$
		 Since such $s\ge s_0$ can be chosen arbitrarily close to $s_0$ we conclude that 
		 \bel{(5)}
		 f(s)=\mathbf a\in M,\qquad s\in(s_0,\infty).
		 \ee
		 
		 If $s\in[0,\infty)$ is such that $h(s)\ne 0$, then we consider \eqref{equ} for $t>0$ such that $h(s)t>s_0$ and $s+t>s_0$. Then \eqref{(5)} yields $\mathbf a=f(s)+\mathbf a$,  whence $f(s)=0$. Therefore, 
		 \bel{imp}
		 h(s)\neq 0\quad \Rightarrow\quad f(s)=\mathbf 0,\qquad s\in[0,\infty).
		 \ee 
		 
		 Combining \eqref{(5)} and \eqref{imp} we get 
		 \bel{pmi}
		 h(s)=0,\quad s\in(s_0,\infty)\qquad\mbox{if only }\; \mathbf a\neq \mathbf 0.
		 \ee
		  
		Consider now two cases. 
		
		\begin{enumerate}
			\item $s_0=0$: Then \eqref{(5)} together with \eqref{f0} imply that $\mathbf a\neq \mathbf 0$. Consequently, $f$ is as given in \eqref{fg1}. 
			
			By \eqref{pmi} we have $h(s)=0$ for $s>0$. From \eqref{equ} for $s=0$ and $t>0$ we get $\mathbf a=f(t)=f(h(0)t)$. Thus \eqref{f0} implies $h(0)>0$.  Consequently,  $h$ is as given in \eqref{fg1}.
			
			\item $s_0>0$: Then \eqref{imp} gives $f(s)=\mathbf 0$ for $s\in[0,s_0)$. From \eqref{equ} for $s=s_0$ and $t>0$ such that $h(s_0)t<s_0$ by \eqref{(5)} we get $\mathbf a=f(s_0)$ which implies that $\mathbf a\neq \mathbf 0$. Consequently,  $f$ is as given in \eqref{fg2}. 
			
			By \eqref{pmi} and \eqref{imp} we have $h(s)=0$ for $s\in[s_0,\infty)$.  Let $s\in[0,s_0)$. Then $f(s+t)=f(h(s)t)$ for $t\in[0,\infty)$. Referring to $f$ as given in \eqref{fg2} we see that 
			$$
			s+t<s_0\quad \Leftrightarrow \quad h(s)t<s_0.
			$$
			Thus $$\tfrac{s_0}{s_0-s}\le h(s)<\tfrac{s_0}{s_0-s-\eps}\qquad \mbox{for}\;\;\eps\in(0,s_0-s].$$
			By taking $\eps\downarrow 0$ we obtain $h(s)=\tfrac{s_0}{s-s_0}$ for $s\in[0,s_0)$. Consequently, $h$ is as given in \eqref{fg2}.  
		\end{enumerate}

	\end{proof}
	
	\subsection{$\mathbf{\mathrm{\mathbf{Ker}}(h)=\emptyset}$}
	
	Let $U=[0,\infty)$. As explained in Remark \ref{rem3},  $(f,h)$ solves \eqref{equ} if and only if $(f,g)$ with $g=1/h$ solves \eqref{rew}. We start with the case where $f$ is injective; cf. \cite{EV}.
	Then we move on to the case when $f$ is right continuous at a point.
	
	\subsubsection{A magma version} 
	Throughout this section we assume that $(M,+)$ is a commutative unital magma with the (two-sided) cancelation property. 
	
	\begin{theorem}\label{inj}
		Assume that $f:[0,\infty)\to M$ is injective and $g:[0,\infty)\to [0,\infty)$.
		
		Then $(f,g)$ solves \eqref{rew} if and only if \begin{enumerate}
			\item  either $g(1)=1$ and
		\bel{fg3}
		f\;\mbox{is additive}\qquad\mbox{and}\qquad  g\equiv 1,
		\ee
		\item or $g(1)\neq 1$ and
		\bel{fg4}
		 f(s)=a(\log(\alpha s+1))\qquad\mbox{and}\qquad  g(s)=\alpha s+1,\quad s\in[0,\infty),
		 \ee
		 where $\alpha\in(0,\infty)$ and $a:[0,\infty)\to M$ is an injective additive function.
		 \end{enumerate}
\end{theorem}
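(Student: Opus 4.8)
The \emph{if} direction is a routine verification that I would dispatch first: for \eqref{fg4} one only needs the identity $(\alpha s+1)(\alpha t+1)=\alpha\bigl(s+g(s)t\bigr)+1$ together with additivity of $a$, and for \eqref{fg3} the equation is literally additivity. So the substance lies in the \emph{only if} direction, where my plan is to turn the abstract equation \eqref{rew} into a statement about the real function $g$ alone.

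The crucial step is to observe that injectivity of $f$, combined with commutativity of $(M,+)$, forces the Go\l \k ab--Schinzel-type operation $s\circ t:=s+g(s)t$ on $[0,\infty)$ to be commutative. Indeed, applying \eqref{rew} to the pairs $(s,t)$ and $(t,s)$ gives $f(s+g(s)t)=f(s)+f(t)=f(t)+f(s)=f(t+g(t)s)$, so injectivity of $f$ yields
\[
s+g(s)t=t+g(t)s,\qquad s,t\in[0,\infty).
\]
I would then set $t=1$, which collapses this to $s+g(s)=1+g(1)s$, that is $g(s)=1+\alpha s$ with $\alpha:=g(1)-1$ (in particular $g(0)=1$). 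To fix the sign of $\alpha$ I would invoke that $g$ takes values in $[0,\infty)$ and, by Remark \ref{rem1}, has empty kernel, so the affine function $1+\alpha s$ must stay strictly positive on all of $[0,\infty)$; this rules out $\alpha<0$, leaving $\alpha\ge 0$ with $\alpha=0$ precisely when $g(1)=1$.

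The two cases of the theorem then correspond exactly to $\alpha=0$ versus $\alpha>0$. If $g(1)=1$ then $\alpha=0$, so $g\equiv 1$ and \eqref{rew} reads $f(s+t)=f(s)+f(t)$, which is case \eqref{fg3}. If $g(1)\neq 1$ then $\alpha>0$, and I would introduce the bijection $\phi:[0,\infty)\to[0,\infty)$ given by $\phi(s)=\log(1+\alpha s)$. Since $(1+\alpha s)(1+\alpha t)=1+\alpha(s\circ t)$, the map $\phi$ is an isomorphism of the semigroup $\bigl([0,\infty),\circ\bigr)$ onto $\bigl([0,\infty),+\bigr)$, so \eqref{rew} transforms into $f(\phi^{-1}(x+y))=f(\phi^{-1}(x))+f(\phi^{-1}(y))$. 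Setting $a:=f\circ\phi^{-1}$ then shows that $a$ is additive, injective (being the composition of the injection $f$ with a bijection), and that $f(s)=a(\log(1+\alpha s))$, i.e.\ case \eqref{fg4}.

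The only genuine obstacle I anticipate is the first step: recognizing that commutativity of the \emph{target} magma together with injectivity of $f$ transfers to commutativity of the operation $s\circ t=s+g(s)t$. Once that is in hand, the substitution $t=1$ does all the work and no independent Go\l \k ab--Schinzel analysis of $g$ is needed; what remains is pure bookkeeping, namely checking positivity of $g$ to conclude $\alpha>0$ and verifying that conjugation by $\phi$ preserves additivity and injectivity in passing from $f$ to $a$.
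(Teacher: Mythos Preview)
Your proposal is correct and essentially identical to the paper's proof: the paper also derives $s+g(s)t=t+g(t)s$ from commutativity plus injectivity, sets $t=1$ to get $g(s)=\alpha s+1$, and in the case $\alpha>0$ conjugates by the bijection $k(s)=\log g(s)=\log(1+\alpha s)$ (your $\phi$) to turn $f$ into an injective additive function $a$. The only cosmetic difference is that you invoke Remark~\ref{rem1} to exclude $\alpha<0$, whereas the paper simply notes that nonnegativity of $g$ on $[0,\infty)$ already forces $\alpha\ge 0$.
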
	 
		 \begin{proof} 
		 	It is easy to check that $(f,g)$ given in \eqref{fg3} and \eqref{fg4} solve \eqref{rew}. 
		 	
		 	By commutativity at the right hand side of \eqref{rew} and injectivity of $f$ we conclude that
		 	$$
		 	s+g(s)t=t+g(t)s,\quad s,t\in[0,\infty).
		 	$$
		 	For $t=1$ we get \bel{hs} g(s)=\alpha s+1,\quad s\ge 0,\ee
		 	where $\alpha=g(1)-1$ and $\alpha\ge 0$ since $g$ is positive.

		 	Consider two possible cases: $g(1)=1$ and $g(1)\neq 1$. 
		 	\begin{enumerate}
		 		\item $g(1)=1$: 
		 		
		 		By \eqref{hs} we have $\alpha=0$ and  $g\equiv 1$. Then by \eqref{rew} it follows that $f$ is additive.
		 		
		 		\item $g(1)\neq 1$:
		 		
		 		By \eqref{hs} we have $\alpha>0$ and $g$ is as given in \eqref{fg4}. 
		 		
		 	It follows from \eqref{hs} that $k:=\log(g):[0,\infty)\to[0,\infty)$ is a bijection (note that $g(s)\ge 1$ for all $s\in[0,\infty)$). Therefore the formula $a\circ k=f$ defines a function $a:[0,\infty)\to M$. Clearly, $a$ is injective and the equation \eqref{rew} in terms of $a$ assumes the form
		 		$$
		 		a(k(s))+a(k(t))=a(k(s+g(s)t)),\quad s,t\ge 0.
		 		$$
		 		But $g$, see \eqref{hs}, satisfies $g(s+g(s)t)=g(s)g(t)$, i.e. $k(s+g(s)t)=k(s)+k(t)$. Therefore  
		 		$$
		 		a(k(s))+a(k(t))=a(k(s)+k(t)),\quad s,t\ge 0.
		 		$$
		 		Since $k$ is bijective on $[0,\infty)$ we conclude that $a$ is an additive function. 
		 			\end{enumerate}
		 \end{proof}

	\subsubsection{Real and vector space versions}
	
	We first consider the case of real-valued $f$.
	
	\begin{theorem}\label{oner}
		Assume that $f:[0,\infty)\to \R$ is non-zero,  right continuous at some point and $g:[0,\infty)\to[0,\infty)$.
		
		Then $(f,g)$ solves \eqref{rew}  if and only if 
		\begin{enumerate}
			\item either
			$$
			f(s)=as,\quad s\in[0,\infty)\qquad\mbox{and}\qquad  g\equiv 1,
			$$
			where $0\neq a\in\R$,
			\item or
			$$
			f(s)=a\log(\alpha s+1)\qquad\mbox{and}\qquad g(s)=\alpha s+1,\quad s\in[0,\infty),
			$$
			where $\alpha>0$ and $0\neq a\in\R$.
		\end{enumerate}
	\end{theorem}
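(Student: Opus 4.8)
My plan is to reduce this to the injective case already handled in Theorem \ref{inj}: once I know $f$ is injective, that theorem (applicable since $\R$ is a commutative unital magma with cancellation) pins down $g$ and the additive structure of $f$, and right continuity then upgrades additivity to linearity. So the work splits into a few reductions plus one genuinely hard injectivity step.

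\emph{Reductions and right continuity everywhere.} First, $g>0$ by Remark \ref{rem1}. Substituting $t=(y-x)/g(x)$ in \eqref{rew} gives the self-similar increment identity $f(y)=f(x)+f\!\left(\tfrac{y-x}{g(x)}\right)$ for $0\le x\le y$. Starting from right continuity at the single point $s^\ast$, letting $y\downarrow s^\ast$ forces $f(u)\to 0$ as $u\downarrow 0$, i.e.\ right continuity at $0$; then for an arbitrary $x$, letting $y\downarrow x$ yields right continuity on all of $[0,\infty)$. Putting $s=0$ in \eqref{rew} gives $f(g(0)t)=f(t)$ for all $t$; if $g(0)\neq 1$, iterating this relation and using $f(0+)=0$ would force $f\equiv 0$, contradicting $f\not\equiv 0$. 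Hence $g(0)=1$.

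\emph{Injectivity of $f$ (the crux).} By the increment identity, $f(y)-f(x)=f\!\left(\tfrac{y-x}{g(x)}\right)$ with positive argument when $y>x$, so $f$ is injective if and only if $f$ has no zero on $(0,\infty)$. Suppose $f(w_0)=0$ for some $w_0>0$; writing $\gamma=g(w_0)>0$ and setting $s=w_0$ in \eqref{rew} gives $f(\gamma t+w_0)=f(t)$ for all $t\ge 0$. If $\gamma<1$, the affine map $t\mapsto \gamma t+w_0$ is a contraction with fixed point $t^\ast=w_0/(1-\gamma)>0$; iterating and using right continuity at $t^\ast$ (the iterates decrease to $t^\ast$) shows $f$ is constant on $[t^\ast,\infty)$, and then the increment identity with $x=t^\ast$ forces $f\equiv 0$, a contradiction. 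The regime $\gamma\ge 1$ is the main obstacle, since there the fixed point leaves $[0,\infty)$: here I would apply the increment identity between $t$ and $\gamma t+w_0$ to manufacture the auxiliary family of zeros $t\mapsto \tfrac{(\gamma-1)t+w_0}{g(t)}$, and combine this proliferation of zeros with right continuity to again collapse $f$ to $0$. This rules out positive zeros, so $f$ is injective.

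\emph{Conclusion.} With $f$ injective, Theorem \ref{inj} gives the dichotomy: either $g\equiv 1$ and $f$ is additive, or $g(s)=\alpha s+1$ with $\alpha>0$ and $f=a\circ\log g$ for an injective additive $a:[0,\infty)\to\R$. It remains only to upgrade additivity to linearity. A Cauchy-additive real function on $[0,\infty)$ that is right continuous at a point is linear; hence in the first case $f(s)=as$, while in the second, since $s\mapsto\log(\alpha s+1)$ is a homeomorphism of $[0,\infty)$, right continuity of $f$ transfers to $a$, giving $a(x)=ax$ and thus $f(s)=a\log(\alpha s+1)$. In both cases $f\not\equiv 0$ forces $a\neq 0$, which is exactly the asserted form; sufficiency of the two families is the direct verification already recorded for Theorem \ref{inj}.
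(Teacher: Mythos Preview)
Your overall architecture---establish right continuity everywhere, prove injectivity, invoke Theorem~\ref{inj}, then upgrade additivity to linearity---matches the paper's. The first and last of these steps are carried out correctly. The genuine problem is the injectivity step.

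Your case $\gamma<1$ is fine, but observe that it is essentially Lemma~\ref{rch}: the contraction argument you give (with $s_1=w_0$) shows more generally that $g(s_1)<1$ at \emph{any} point forces $f\equiv 0$. Hence under the standing hypotheses one always has $g\ge 1$, so every zero $w_0$ satisfies $\gamma=g(w_0)\ge 1$. In other words, the ``easy'' branch of your dichotomy is vacuous, and the entire content lies in the branch you only sketch.

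That sketch does not close. Writing down the family $z(t)=\tfrac{(\gamma-1)t+w_0}{g(t)}$ of zeros is correct, but since $g$ is unknown you have no control over the range of $z$; right continuity alone will not let you pass from ``many zeros at unspecified locations'' to $f\equiv 0$. Concretely, for $\gamma=1$ your relation reduces to $f(t+w_0)=f(t)$, i.e.\ periodicity, and a merely right-continuous periodic solution of \eqref{rew} is not obviously trivial without further structure. The paper supplies exactly that missing structure: from $g\ge 1$ (Lemma~\ref{rch}) it first bootstraps to full continuity (Lemma~\ref{rcc}), then uses Dini derivatives and the functional equation to prove monotonicity (Lemma~\ref{cvm}), and only then obtains injectivity (Lemma~\ref{mi}) by showing that a single positive zero would force an interval of zeros on which $g<1$, contradicting Lemma~\ref{rch}. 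Some device of comparable strength---continuity plus monotonicity, or an equivalent---appears to be needed; the bare proliferation-of-zeros idea is not enough as stated.
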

For the proof of Theorem \ref{oner} we use several auxiliary results which are considered first. 

In Lemmas \ref{rcrc}, \ref{rch}, \ref{rcc}, \ref{cvm} and \ref{mi} as well as in Proposition \ref{ccc} we assume that a non-zero function $f:[0,\infty)\to \R$ satisfies \eqref{rew} with some $g:[0,\infty)\to[0,\infty)$.

\begin{lemma}\label{rcrc}
	If $f$ is  right continuous at some point, then it is a right continuous function.
\end{lemma}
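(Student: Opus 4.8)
The plan is to exploit the functional equation \eqref{rew} with its first argument frozen, turning the affine substitution $t\mapsto s+g(s)t$ into a homeomorphism that transports right continuity at $0$ to right continuity at every point. First I would record the two facts supplied by the preliminaries: by \eqref{f0} we have $f(0)=0$, and by Remark \ref{rem1} the kernel of $g$ is empty, so $g(s)>0$ for every $s\ge 0$.

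Next, fix an arbitrary $p\ge 0$ and set $s=p$ in \eqref{rew}, obtaining $f(p+g(p)t)=f(p)+f(t)$ for all $t\ge 0$. Since $g(p)>0$, the map $t\mapsto p+g(p)t$ is an increasing bijection of a right neighbourhood of $0$ onto a right neighbourhood of $p$; writing $x=p+g(p)t$, so that $t=(x-p)/g(p)$ and $x\downarrow p$ is equivalent to $t\downarrow 0$, one sees that $\lim_{x\downarrow p}f(x)$ exists if and only if $\lim_{t\downarrow 0}f(t)$ exists, and in that case $\lim_{x\downarrow p}f(x)=f(p)+\lim_{t\downarrow 0}f(t)$. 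Hence $f$ is right continuous at $p$ if and only if $\lim_{t\downarrow 0}f(t)=0=f(0)$, i.e. if and only if $f$ is right continuous at $0$. Crucially, the point $p$ plays no role here beyond the inequality $g(p)>0$, so right continuity at $0$ is equivalent to right continuity at each individual point.

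Finally, I would apply this equivalence to the distinguished point at which $f$ is assumed right continuous: that assumption forces $\lim_{t\downarrow 0}f(t)=0$, whence $f$ is right continuous at $0$, and running the same equivalence in the reverse direction yields right continuity at every $p\ge 0$. The only delicate point is the change of variables, which needs $g(p)>0$ to guarantee that $t\downarrow 0$ and $x\downarrow p$ describe the same one-sided limit; this is exactly what Remark \ref{rem1} provides, so no regularity of $g$ is invoked. I do not expect any genuine obstacle: once positivity of $g$ is in hand, the whole statement reduces to a single substitution.
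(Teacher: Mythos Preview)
Your proof is correct and follows essentially the same two-step route as the paper: transport right continuity from the distinguished point to $0$, then from $0$ to every $p\ge 0$, using the functional equation and the positivity of $g$. The only cosmetic difference is that the paper handles the second step via the dual equation \eqref{equ} with $h=1/g$, writing $f(s+t)-f(s)=f(h(s)t)$ and letting $t\downarrow 0$, whereas you stay entirely within \eqref{rew} by inverting the affine substitution $x=p+g(p)t$; both arguments are the same idea.
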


\begin{proof} 
	Let $f$ be right continuous at $s_1\in[0,\infty)$. From \eqref{rew} we have
	$$
	f(s_1+g(s_1)t)-f(s_1)=f(t),\quad t\ge 0.
	$$
	Taking $t\to 0^+$ we see that  right continuity of $f$ at $s_1$ implies that $f$ is right continuous at $0$. 
	
	Fix arbitrary $s>0$. Then by \eqref{rew} for $t\ge 0$ we have
	$$
	f(s+t)-f(s)=f(h(s)t),
	$$
	where $h=1/g$. Taking $t\to 0^+$ we conclude that $f$ is right continuous at $s$. 
\end{proof}

\begin{lemma}\label{rch}
If $f$ is right continuous, then $g(s)\ge 1$ for every $s\ge 0$.
\end{lemma}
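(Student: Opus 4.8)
The plan is to argue by contradiction: assume $g(s_*)<1$ for some $s_*\in[0,\infty)$ and deduce that $f$ vanishes identically, contradicting $f\not\equiv 0$. The driving observation is that, for fixed $s_*$, the map $\phi(t)=s_*+g(s_*)t$ appearing on the left of \eqref{rew} is an affine contraction (slope $g(s_*)<1$) with a unique fixed point $t^*=s_*/(1-g(s_*))\in[0,\infty)$, and that \eqref{rew} with $s=s_*$ reads $f(\phi(t))=f(s_*)+f(t)$. Iterating this identity lets me control $f$ along an orbit converging to $t^*$, where I can invoke right continuity.

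First I would fix any $t_0>t^*$ and set $t_{n+1}=\phi(t_n)$. Telescoping $f(\phi(t))=f(s_*)+f(t)$ gives $f(t_n)=f(t_0)+n\,f(s_*)$, while solving the linear recursion gives $t_n=t^*+g(s_*)^n(t_0-t^*)$, so that $t_n\downarrow t^*$. Since $f$ is right continuous (everywhere, by Lemma \ref{rcrc}, hence at $t^*$), the sequence $f(t_n)$ converges to $f(t^*)$; a sequence of the form $f(t_0)+n\,f(s_*)$ converges only if $f(s_*)=0$, and then its constant value forces $f(t_0)=f(t^*)$. As $t_0>t^*$ was arbitrary, this simultaneously yields $f(s_*)=0$ and $f\equiv f(t^*)=:c$ on $[t^*,\infty)$.

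Next I would feed this back into \eqref{rew}. Taking $s=t=t^*$ we have $t^*+g(t^*)t^*\ge t^*$, so all three arguments lie in $[t^*,\infty)$ and the equation becomes $c=c+c$, i.e. $c=0$; hence $f\equiv 0$ on $[t^*,\infty)$. Finally, for an arbitrary $s\in[0,\infty)$ I would choose $t$ so large that both $t\ge t^*$ and $s+g(s)t\ge t^*$ — possible because $g(s)>0$ for every $s$ by Remark \ref{rem1} — whereupon \eqref{rew} gives $0=f(s)+0$, so $f(s)=0$. Thus $f\equiv 0$, the desired contradiction, which proves $g(s)\ge 1$ for all $s$.

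The main obstacle, and the only place regularity enters, is guaranteeing that the orbit approaches $t^*$ from the right, so that right continuity (rather than two-sided continuity, which is not yet available) can be applied; this is exactly why the iteration is started from $t_0>t^*$, forcing $t_n\downarrow t^*$. Everything else is elementary manipulation of \eqref{rew} together with the positivity of $g$.
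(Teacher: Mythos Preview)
Your proof is correct and follows essentially the same route as the paper: iterate the affine contraction $\phi(t)=s_*+g(s_*)t$ from a point $t_0>t^*$, use right continuity at the fixed point together with the telescoped identity $f(t_n)=f(t_0)+n f(s_*)$ to force $f(s_*)=0$ and $f\equiv c$ beyond $t^*$, then use \eqref{rew} twice to get $c=0$ and finally $f\equiv 0$. The only cosmetic difference is your last step, where you plug a single large $t$ directly into \eqref{rew}; the paper instead introduces $s'=\inf\{s:f|_{(s,\infty)}\equiv 0\}$ and swaps the roles of the arguments, but the content is the same.
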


\begin{proof} 
	Assume $g(s_1)<1$ for some $s_1\ge 0$. Then for $\phi:[0,\infty)\to[0,\infty)$ defined by $\phi(t)=g(s_1)t+s_1$,
	$$
	\tilde{s}<\phi(t)<t\qquad \forall\,t>\tfrac{s_1}{1-g(s_1)}=:\tilde{s}.
	$$
	Hence $t_n:=\phi^{\circ n}(t)=g^n(s_1)t+s_1\sum_{k=0}^{n-1}\,g^k(s_1)\to\,\tilde{s}^+$ as $n\to\infty$ for $t>\tilde s$. Thus right continuity of $f$ implies
	\bel{limf}
	f(\tilde{s})-f(t)=\lim_{n\to \infty}\,\left(f(t_n)-f(t)\right).
	\ee
	On the other hand for any $k\ge 1$ we have $t_k=\phi(t_{k-1})$ and thus \eqref{rew} yields
	$$
	f(t_k)=f(g(s_1)t_{k-1}+s_1)=f(t_{k-1})+f(s_1)
	$$
	whence
	$$
	f(t_n)-f(t)=\sum_{k=1}^n\left(f(t_k)-f(t_{k-1})\right)= nf(s_1)\qquad \forall\,t>\tilde{s},\;n\ge 1.
	$$
	Thus, by \eqref{limf}, we obtain
	$$
	f(\tilde{s})-f(t)=\lim_{n\to \infty}\,nf(s_1)\qquad \forall\,t>\tilde{s}.
	$$
	Therefore $f(s_1)=0$ and $f|_{(\tilde s,\infty)}\equiv a:=f(\tilde{s})$. Taking now $s,t>\tilde s$ in \eqref{rew} we get $a=0$. 
	
	Let $s'=\inf\{s\ge 0:\,f|_{(s,\infty)}\equiv 0\}$. Assume $s'>0$. Then for $s\in(0,s')$ and $t>s'$ in \eqref{rew} we have 
	$0=f(t+g(t)s)=f(s)$ - a contradiction. Thus $s'=0$. But this is impossible since $f\not\equiv 0$.
\end{proof}

\begin{lemma}\label{rcc}
	If $f$ is right continuous, then it is a continuous function.
\end{lemma}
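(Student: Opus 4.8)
The plan is to upgrade right continuity to full continuity by establishing left continuity at every point $s_0>0$; left continuity at $0$ is vacuous since the domain is $[0,\infty)$. The main tool is the ``dual'' form of \eqref{rew} already exploited in Lemma \ref{rcrc}: writing $h=1/g$, which is well defined by Remark \ref{rem1}, the substitution $t=h(s)u$ turns \eqref{rew} into
$$
f(s+u)-f(s)=f\big(h(s)u\big),\qquad s,u\ge 0.
$$

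First I would fix $s_0>0$ and, for $u\in(0,s_0)$, apply the displayed identity with $s=s_0-u$, which gives
$$
f(s_0)-f(s_0-u)=f\big(h(s_0-u)\,u\big).
$$
The crucial observation is that the argument on the right can be squeezed to $0$. By Lemma \ref{rch} we have $g(s)\ge 1$ for every $s\ge 0$, hence $h(s)=1/g(s)\le 1$ for every $s\ge 0$, and therefore
$$
0\le h(s_0-u)\,u\le u.
$$

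Finally I would let $u\to 0^+$. The squeeze forces $h(s_0-u)u\to 0^+$, so that by right continuity of $f$ (guaranteed at the origin by Lemma \ref{rcrc}) together with $f(0)=0$ from \eqref{f0}, we obtain $f\big(h(s_0-u)u\big)\to 0$, whence $f(s_0-u)\to f(s_0)$. This is precisely left continuity at $s_0$, and combined with the already established right continuity it yields continuity of $f$ on $[0,\infty)$. I do not expect a serious obstacle here: the only real content is recognizing that the inequality $g\ge 1$ from Lemma \ref{rch} is exactly what guarantees $h(s_0-u)u\le u\to 0$, so that the increment $f(s_0)-f(s_0-u)$ is controlled by the behaviour of $f$ near $0$, where right continuity is available.
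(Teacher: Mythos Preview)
Your proof is correct and follows essentially the same route as the paper: you pass to the dual form $f(s+u)-f(s)=f(h(s)u)$ with $h=1/g$, evaluate at $s=s_0-u$, and use Lemma~\ref{rch} to squeeze $h(s_0-u)u\in(0,u]$ so that right continuity at $0$ forces the increment to vanish. The paper's argument is identical up to notation and the sign in which the increment is written.
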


\begin{proof} 
	It suffices to prove that $f$ is left continuous at any $s>0$. Fix arbitrary $s>0$. Then  by \eqref{rew} we have
	$$
	f(s-t)-f(s)=-f(th(s-t))\qquad \forall\,t\in[0,s],
	$$
	where $h=1/g$. By Lemma \ref{rch} we have $0<th(s-t)\le t$ for $t\in(0,s]$. Therefore, since $f$ is right continuous at $0$, for $t\to 0^+$ the left hand side turns to zero and the result follows.  
	
\end{proof}

Combining Lemmas \ref{rcrc} and \ref{rcc} we get the following result.

\begin{proposition}\label{ccc}
	If $f$ is  right continuous at some point, then it is a continuous function. 
\end{proposition}

If $f$ is monotone, then it has a countable set of points of discontinuity, i.e. in view of Proposition \ref{ccc} any monotone $f$ satisfying \eqref{rew} is continuous. It appears that this implication can be reversed with the help of right upper and lower Dini derivatives $D^+$ and $D_+$. 

\begin{lemma}\label{cvm}
	If $f$ is continuous, then it is a monotone function.
\end{lemma}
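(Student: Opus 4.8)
The plan is to show that the functional equation forces the one-sided Dini derivatives of $f$ to be, up to a positive multiplicative factor, constant in sign across $[0,\infty)$, and then to invoke the classical monotonicity criterion: a continuous function on an interval whose right upper Dini derivative $D^+f$ is everywhere $\ge 0$ is non-decreasing (and, applied to $-f$, a continuous function with $D_+f\le 0$ everywhere is non-increasing). First I would pass to the equivalent form \eqref{equ}. By Remark \ref{rem1} we have $\mathrm{Ker}(g)=\emptyset$, so $h:=1/g$ is well defined and $(f,h)$ solves \eqref{equ}; moreover continuity gives right continuity, so $g(s)\ge 1$ by Lemma \ref{rch} and hence $h(s)\in(0,1]$, in particular $h(s)>0$. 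Writing \eqref{equ} as $f(s+t)-f(s)=f(h(s)t)$ and using $f(0)=0$ from \eqref{f0}, for fixed $s\ge 0$ and $t>0$ I substitute $u:=h(s)t$ to obtain
$$
\frac{f(s+t)-f(s)}{t}=\frac{f(h(s)t)}{t}=h(s)\,\frac{f(u)-f(0)}{u},\qquad u=h(s)t\to 0^+\ \text{as}\ t\to 0^+ .
$$
Since $h(s)>0$ is a constant, taking $\limsup$ and $\liminf$ as $t\to 0^+$ yields the key identities
$$
D^+f(s)=h(s)\,D^+f(0)\quad\text{and}\quad D_+f(s)=h(s)\,D_+f(0),\qquad s\ge 0 .
$$

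The second step, the sign analysis, is then immediate. Because $h(s)>0$, the sign of $D^+f(s)$ equals that of the constant $D^+f(0)$, and likewise for $D_+$. If $D^+f(0)\ge 0$, then $D^+f(s)\ge 0$ for every $s$, and the monotonicity criterion gives that $f$ is non-decreasing. Otherwise $D^+f(0)<0$; since the lower Dini derivative never exceeds the upper one, $D_+f(0)\le D^+f(0)<0$, so $D_+f(s)\le 0$ for every $s$, and applying the criterion to $-f$ (using $D^+(-f)=-D_+f$) shows $f$ is non-increasing. The trivial inequality $D_+f(0)\le D^+f(0)$ is exactly what makes the two cases $D^+f(0)\ge 0$ and $D^+f(0)<0$ exhaustive, ruling out the only configuration in which neither criterion would apply; possibly infinite Dini derivatives cause no trouble, as the same dichotomy on the sign of $D^+f(0)$ still applies.

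The only genuinely external ingredient, and the step I would be most careful to state precisely, is the Dini-derivative monotonicity criterion itself. I would record it explicitly (continuity together with $D^+f\ge 0$ throughout an interval forces $f$ to be non-decreasing, proved by the standard perturbation $f_\eps(x)=f(x)+\eps x$, for which $D^+f_\eps>0$, followed by letting $\eps\downarrow 0$) and then reduce the non-increasing case to it via $-f$. The conceptual crux — and the part that actually uses the equation — is the first step: recognizing that \eqref{equ} rescales the difference quotient at $s$ into the difference quotient at $0$ by the positive factor $h(s)$, so that $D^+f$ and $D_+f$ cannot change sign. After that, the monotonicity criterion and the inequality $D_+f(0)\le D^+f(0)$ finish the argument with no further computation.
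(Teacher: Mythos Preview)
Your argument is correct and essentially identical to the paper's: both use the functional equation to express the right Dini derivatives of $f$ at an arbitrary $s$ as a positive multiple (namely $h(s)=1/g(s)$) of those at $0$, and then invoke the classical Dini-derivative monotonicity criterion (the paper cites \L ojasiewicz's text for this step). The only cosmetic difference is that you obtain the equalities $D^+f(s)=h(s)\,D^+f(0)$ and $D_+f(s)=h(s)\,D_+f(0)$, whereas the paper, arguing via a single sequence realising the Dini derivative at $0$ and working directly with \eqref{rew}, records only the one-sided inequalities $D^+f(s)\ge D^+f(0)/g(s)$ and $D_+f(s)\le D_+f(0)/g(s)$---either version yields the same sign dichotomy.
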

\begin{proof} 
There exists a sequence $(a_n)_{n\ge 1}$  in $(0,\infty)$ such that $\lim_{n\to\infty}\,a_n=0$ and, see \eqref{f0}, 
$$
\lim_{n\to\infty}\,\tfrac{f(a_n)}{a_n}= D^+f(0).
$$
Then, by \eqref{rew}, for every $s\ge 0$ we get
$$
D^+f(s)\ge \lim_{n\to\infty}\,\tfrac{f\left(s+a_ng(s)\right)-f(s)}{a_ng(s)}=\lim_{n\to\infty}\,\tfrac{f\left(a_n\right)}{a_ng(s)}=\tfrac{D^+f(0)}{g(s)}.
$$
Similarly, there exists a sequence $(b_n)_{n\ge 1}$  in $(0,\infty)$ such that $\lim_{n\to\infty}\,b_n=0$ and 
$$
D_+f(s)\le \lim_{n\to\infty}\,\tfrac{f\left(s+b_ng(s)\right)-f(s)}{b_ng(s)}=\lim_{n\to\infty}\,\tfrac{f\left(b_n\right)}{b_ng(s)}=\tfrac{D_+f(0)}{g(s)}.
$$
Therefore  
$$D_+f(s)\le \tfrac{D_+f(0)}{g(s)}\le \tfrac{D^+f(0)}{g(s)}\le D^+f(s),\qquad s\ge 0.$$ 
Consequently either $D^+f(s)\ge 0$ for every $s\ge 0$ or  $D_+f(s)\le 0$ for every $s\ge 0$. From \cite{SL}, Theorem 7.4.13 and its Corollary,  it follows that $f$ is either a non-decreasing or a non-increasing function.
\end{proof}

Finally we connect monotonicity of $f$ with its injectivity.
\begin{lemma}\label{mi}
	If $f$ is monotone, then it is an injective function.
\end{lemma}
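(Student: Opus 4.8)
The statement to prove is Lemma \ref{mi}: if $f$ is monotone, then it is injective. The plan is to argue by contradiction, exploiting the fact that a monotone non-injective function must be constant on some nondegenerate interval, and then showing that such a plateau forces $f\equiv\mathbf 0$ via the multiplicativity of $g$ and the Go\l \k ab-Schinzel structure of \eqref{rew}.

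First I would suppose $f$ is monotone but not injective. Then there exist $0\le s_1<s_2$ with $f(s_1)=f(s_2)$, and by monotonicity $f$ is constant on the whole interval $[s_1,s_2]$. The goal is to propagate this constancy. Recall from Lemma \ref{rch} that $g(s)\ge 1$ everywhere (monotone $\Rightarrow$ continuous by Proposition \ref{ccc}, so Lemma \ref{rch} applies), and that $g(s+g(s)t)=g(s)g(t)$ fails in general — but the key identity I would lean on instead comes directly from \eqref{rew}: for $t\in[0,s_2-s_1]$ we have $f(s_1+g(s_1)t)=f(s_1)+f(t)=f(s_2)+\bigl(f(s_1+t)-f(s_1)\bigr)$ type manipulations. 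More cleanly, set $c:=f(s_1)=f(s_2)$; plugging $s=s_1$ into \eqref{rew} gives $f\bigl(s_1+g(s_1)t\bigr)=c+f(t)$, so $f$ restricted to the image $\{s_1+g(s_1)t:t\ge 0\}$ is just a shifted copy of $f$.

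The cleanest route I expect is: from $f(s_1)=f(s_2)$ and $s_1<s_2$, use \eqref{rew} with $s=s_1$ to write $f(t)=f(s_1+g(s_1)t)-c$, and compare with $s=s_2$ giving $f(t)=f(s_2+g(s_2)t)-c$. Equating, $f(s_1+g(s_1)t)=f(s_2+g(s_2)t)$ for all $t\ge 0$. As $t$ ranges over $[0,\infty)$, $s_1+g(s_1)t$ and $s_2+g(s_2)t$ both sweep $[s_1,\infty)$ and $[s_2,\infty)$ respectively (using $g\ge 1>0$). Since $s_2>s_1$, this matching of two affine reparametrizations of $f$ having equal values should force $f$ to be constant on an unbounded set, and then monotonicity plus the additive structure \eqref{rew} (taking $s,t$ large with $f(s)=f(s+t)$, so $f(t)=0$ for all $t$) yields $f\equiv\mathbf 0$, contradicting $f\not\equiv 0$. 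In effect this mirrors the final paragraph of the proof of Lemma \ref{rch}, where a right tail on which $f$ vanishes is pushed down to the origin.

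The main obstacle I anticipate is handling the relationship between the two affine images carefully: $g(s_1)$ and $g(s_2)$ need not be equal, so $s_1+g(s_1)t=s_2+g(s_2)t$ has at most one solution in $t$, and I cannot simply cancel. Instead I must use the matching $f(s_1+g(s_1)t)=f(s_2+g(s_2)t)$ together with monotonicity to deduce that between the two (distinct) arguments $f$ takes equal values, hence is constant on the intervening interval, and then iterate to cover a half-line. Once constancy on a half-line $(s',\infty)$ is established, the closing argument is exactly the one already used at the end of Lemma \ref{rch}: define $s'=\inf\{s\ge 0:f|_{(s,\infty)}\equiv a\}$, show the infimum is $0$, and conclude $f\equiv 0$, a contradiction. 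So the delicate part is purely the propagation of the plateau to an unbounded interval; the terminal contradiction is routine and can cite the technique of Lemma \ref{rch}.
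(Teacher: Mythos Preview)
Your plan is sound and would lead to a valid proof once the propagation step is written out (for each $t\ge 0$ monotonicity makes $f$ constant on the interval with endpoints $s_1+g(s_1)t$ and $s_2+g(s_2)t$, and one checks in all three cases $g(s_1)\lessgtr g(s_2)$, $g(s_1)=g(s_2)$ that these intervals cover $[s_1,\infty)$). The paper, however, takes a shorter and rather different route that avoids this case analysis entirely.

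The key difference is where the plateau is located. Instead of working with a plateau $[s_1,s_2]$ somewhere in the interior and pushing it rightward, the paper uses a \emph{single} application of \eqref{rew} to produce a zero of $f$ at a positive point: from $f(t_1)=f(t_2)$ with $t_1<t_2$, set $s_1=\tfrac{t_2-t_1}{g(t_1)}>0$ and read off $f(s_1)=f(t_1+g(t_1)s_1)-f(t_1)=f(t_2)-f(t_1)=0$. Monotonicity together with $f(0)=0$ then gives $f\equiv 0$ on $[0,s_1]$, so the plateau sits at the origin. From there the paper does \emph{not} invoke the tail argument of Lemma~\ref{rch}; instead it defines $s'=\sup\{r:f|_{[0,r]}\equiv 0\}>0$ and uses \eqref{rew} on $[0,s')$ to force $s+g(s)t\le s'$, which yields $g(s)<1$ for $s\in(0,s')$, contradicting Lemma~\ref{rch} directly. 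Your approach trades this short computation for a geometric covering argument; it has the mild advantage of not really needing $g\ge 1$ (you only use $g>0$, which comes from Remark~\ref{rem1}), but it is noticeably more work to execute than the paper's two-line reduction.
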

		\begin{proof} 
			Suppose $f$ is not injective. Then $f(t_1)=f(t_2)$ for some $t_1,t_2\ge 0$ such that $t_1<t_2$. By \eqref{rew}, with arguments $t_1$ and $s_1=\tfrac{t_2-t_1}{g(t_1)}>0$ it follows that
			$$
			f(s_1)=f(t_1+g(t_1)s_1)-f(t_1)=f(t_2)-f(t_1)=0.
			$$  
			Since $f$ is monotone and $f(0)=0$ it follows that $f|_{[0,s_1]}\equiv 0$. Therefore $$s'=\sup\{r\ge 0:\,f|_{[0,r]}\equiv 0\}>0.$$

		      For $s,t\in[0,s')$ equation \eqref{rew} implies $f(s+g(s)t)=0$, whence $s+g(s)t\le s'$. It means that 
			$g(s)\le \tfrac{s'-s}{t}$ 	for every $s,t\in[0,s')$. Thus taking $t\in(s'-s,s')$ we get $g(s)<1$, $s\in(0,s')$. 
			
			Since $f$ is monotone then it is right continuous at a point in $[0,\infty)$ and, in view of Lemma's \ref{rcrc} and \ref{rch}, $g(s)\ge 1$ for every $s\ge 0$, a contradiction.
		\end{proof}

		\begin{proof}[Proof of Theorem \ref{oner}]
		By Proposition \ref{ccc}  and Lemma \ref{cvm} function $f$ is monotone and it follows from Lemma \ref{mi} that $f$ is   injective. Thus by referring to Theorem \ref{inj} we conclude the proof since an additive and monotone function is linear - see e.g. \cite{JA}, Ch. 2.1.1.
		  \end{proof}
		
	Theorem \ref{oner} can be extended to $f$ assuming values in a real topological vector space $X$ with dual $X^*$ which separates points on $X$, i.e. for every ${\bf x}\in X\setminus\{\bf 0\}$ there exists an $x^*\in X^*$ such that $x^*{\bf x}\neq 0$. Consequently, for ${\bf a},\,{\bf b}\in X$ if $x^*{\bf a}=x^*{\bf  b}$ for every $x^*\in X^*$, then $\bf a=\bf  b$. Note that the dual of a locally convex topological vector space $X$ separates points on $X$ (see e.g. \cite{WR}, Chapter 3: Corollary to Theorem 3.4; consult also Exercise 5(d) in the same chapter).

	\begin{corollary}\label{corx}
		Assume that $f:[0,\infty)\to X$ is non-zero, for every $x^*\in X^*$ the function $x^*\circ f$ is right continuous at some point and  $g:[0,\infty)\to[0,\infty)$.
		
		Then $(f,g)$ solves \eqref{rew}  if and only if 
		\begin{enumerate}
			\item either
			\bel{fg5}
			f(s)={\bf a}s,\quad s\in[0,\infty),\qquad\mbox{and}\qquad  g\equiv 1,
			\ee
			where ${\bf a}\in X\setminus\{\bf 0\}$,
			\item or
			\bel{fg6}
			f(s)={\bf a}\log(\alpha s+1)\quad\mbox{and}\quad g(s)=\alpha s+1,\quad s\in[0,\infty),
			\ee
			where $\alpha>0$ and ${\bf a}\in X\setminus\{\bf 0\}$.
		\end{enumerate}
	\end{corollary}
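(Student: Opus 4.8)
The plan is to reduce the vector-valued statement to the scalar Theorem \ref{oner} by testing $f$ against the functionals in $X^*$ and then reassembling $f$ using the assumption that $X^*$ separates points. The sufficiency (that \eqref{fg5} and \eqref{fg6} solve \eqref{rew}) is a direct verification: for \eqref{fg6} one uses $g(s)=\alpha s+1$ together with the identity
$$
\log\big(\alpha(s+g(s)t)+1\big)=\log\big((\alpha s+1)(\alpha t+1)\big)=\log(\alpha s+1)+\log(\alpha t+1),
$$
and the linear case \eqref{fg5} is immediate.

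For the converse I would fix an arbitrary $x^*\in X^*$ and apply it to both sides of \eqref{rew}. Since $x^*$ is linear and $g$ is scalar-valued, the argument $s+g(s)t$ is unaffected and we obtain that $(x^*\circ f,\,g)$ solves \eqref{rew} with the \emph{same} $g$. By hypothesis $x^*\circ f$ is right continuous at some point, so whenever $x^*\circ f\not\equiv 0$ the scalar Theorem \ref{oner} applies to $(x^*\circ f,g)$. Because $f\not\equiv\mathbf 0$, there is an $s_1$ with $f(s_1)\ne\mathbf 0$, and since $X^*$ separates points there exists $x_0^*$ with $x_0^*\circ f\not\equiv 0$. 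Applying Theorem \ref{oner} to $(x_0^*\circ f,g)$ pins down the form of $g$ once and for all: either $g\equiv 1$ or $g(s)=\alpha s+1$ with some $\alpha>0$. As $g$ is a single fixed function, this dichotomy is then the same for every functional.

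In the case $g\equiv 1$, Theorem \ref{oner} gives $(x^*\circ f)(s)=a_{x^*}s$ for every $x^*$ with $x^*\circ f\not\equiv 0$, while for the remaining $x^*$ one has trivially $(x^*\circ f)(s)=0=0\cdot s$. Setting $\mathbf a:=f(1)$ and evaluating at $s=1$ yields $a_{x^*}=x^*f(1)=x^*\mathbf a$, so $x^*f(s)=x^*(\mathbf a s)$ for \emph{all} $x^*$ and all $s$; separation of points then gives $f(s)=\mathbf a s$, with $\mathbf a\ne\mathbf 0$ since $f\not\equiv\mathbf 0$, which is \eqref{fg5}. In the case $g(s)=\alpha s+1$ the identical scheme gives $(x^*\circ f)(s)=a_{x^*}\log(\alpha s+1)$ for all $x^*$; putting $\mathbf a:=\tfrac{1}{\log(\alpha+1)}f(1)$ (well defined since $\alpha>0$) and evaluating at $s=1$ gives $a_{x^*}=x^*\mathbf a$, whence $x^*f(s)=x^*\big(\mathbf a\log(\alpha s+1)\big)$ for all $x^*$, and separation of points yields $f(s)=\mathbf a\log(\alpha s+1)$ with $\mathbf a\ne\mathbf 0$, which is \eqref{fg6}.

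The computations here are routine; the step I would regard as the crux is the reassembly. One must ensure that the scalar constant produced by Theorem \ref{oner} for each functional is exactly the value of that functional at one \emph{fixed} vector $\mathbf a\in X$ independent of $x^*$, so that separation of points recovers $f$ globally. Choosing $\mathbf a$ through the single value $f(1)$ and then verifying the identity $x^*f(s)=x^*(\mathbf a s)$ (resp. $x^*f(s)=x^*(\mathbf a\log(\alpha s+1))$) for \emph{every} $x^*$, including those annihilating $f$, is precisely what makes the reconstruction legitimate, whereas the consistency of the $g$-dichotomy across functionals is automatic because $g$ does not depend on $x^*$.
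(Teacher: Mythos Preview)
Your proof is correct and follows essentially the same approach as the paper's: reduce to the scalar case by composing with functionals, apply Theorem~\ref{oner} to determine $g$ and the form of each $x^*\circ f$, then reassemble $f$ via separation of points. The only cosmetic difference is the choice of normalizing point---the paper takes $\mathbf a=f\bigl(\tfrac{e-1}{\alpha}\bigr)$ (so that $\log(\alpha s+1)=1$ there) rather than $\mathbf a=\tfrac{1}{\log(\alpha+1)}f(1)$, but the argument is the same.
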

	\begin{proof} 
		Note that for any $x^*\in X^*$  the pair  $(x^*\circ f,\,g)$ solves \eqref{rew}. Moreover, since $f$ is non-zero, $x^*\circ f$ is non-zero for some $x^*\in X^*$ and it follows from Theorem \ref{oner} that 
		$$
		g(s)=\alpha s+1,\quad s\in[0,\infty),
		$$  
		where $\alpha\ge 0$. 
		
		If $\alpha=0$, then $f$ is additive and for every $x^*\in X^*$ the additive and right continuous at a point function $x^*\circ f$ has the form
		$$
		x^*f(s)=(x^*f(1))\,s,\quad s\in[0,\infty),
		$$
		whence $f(s)=f(1)s$ for $s\in[0,\infty)$, and we have \eqref{fg5} with ${\bf a}=f(1)\in X\setminus\{\bf 0\}$.
		
		If $\alpha>0$, then by Theorem \ref{oner} for every $x^*\in X^*$ either $x^*\circ f\equiv 0$, or 
		$$
		x^*f(s)=a\log(\alpha s+1),\quad s\in[0,\infty),
		$$ 
		where $0\neq a\in\R$; in the second case
		$$
		a=x^*f\left(\tfrac{e-1}{\alpha}\right).
		$$
		
		Consequently, for every $x^*\in X^*$ in both cases we have
		$$
		x^*f(s)=x^*f\left(\tfrac{e-1}{\alpha}\right)\,\log(\alpha s+1),\quad s\in[0,\infty),
		$$
		i.e.
		$$
		f(s)=f\left(\tfrac{e-1}{\alpha}\right)\,\log(\alpha s+1),\quad s\in[0,\infty).
		$$
		Thus we get \eqref{fg6} with ${\bf a}=f\left(\tfrac{e-1}{\alpha}\right)\in X\setminus\{\bf 0\}.$
	\end{proof}

	\section{CGS equations on a vector space}
		Throughout this section  $U=V$, a vector space over a field $\mathbb F$. As it has already been observed, see Remark \ref{rem2}, if $(f,h)$ solves \eqref{equ} on $V$, then $\mathrm{Ker}(h)=\emptyset$. Therefore, due to Remark \ref{rem3}, equations \eqref{equ} and \eqref{rew} are equivalent with $hg\equiv 1$. 
		
		We assume that $(M,+)$ is a unital magma with the two-sided cancelation property.
		
		\begin{theorem}\label{injR}
			Assume that $f:V\to M$ is a non-zero function and $g:V\to \mathbb F$. 
			
			Then $(f,g)$ solves \eqref{rew} if and only if $f$ is additive and $g\equiv 1$.		
		\end{theorem}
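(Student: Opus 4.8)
The plan is to treat the easy implication by direct substitution and to concentrate all the work on the forward direction, exploiting the one feature of a vector space that $[0,\infty)$ lacks: every nonzero scalar is invertible, so for each $s$ the affine self-map $t\mapsto s+g(s)t$ of $V$ is a bijection. Recall first from Remark \ref{rem1} that $\mathrm{Ker}(g)=\emptyset$, so $g$ takes values in $\mathbb F\setminus\{0\}$ and all these affine maps are genuinely invertible.

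The engine of the argument is a fixed-point trick. Fix $s$ with $g(s)\neq 1$. Since $1-g(s)\in\mathbb F\setminus\{0\}$, the vector $t_s=(1-g(s))^{-1}s$ is the unique fixed point of $t\mapsto s+g(s)t$, i.e. $s+g(s)t_s=t_s$. Plugging $t=t_s$ into \eqref{rew} gives $f(t_s)=f(s)+f(t_s)$, and since $\mathbf 0+f(t_s)=f(t_s)$, right cancellation in $M$ yields $f(s)=\mathbf 0$. Thus $g(s)\neq 1\Rightarrow f(s)=\mathbf 0$; equivalently, writing $N=\{g=1\}$, the set $\{f\neq\mathbf 0\}$ is contained in $N$, and on $N$ equation \eqref{rew} is plain left additivity $f(s+t)=f(s)+f(t)$. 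Note this step uses neither commutativity nor associativity of $M$, only the neutral element and right cancellation.

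Next I would upgrade this to full additivity of $f$. By the dichotomy just proved, $f(s+t)=f(s)+f(t)$ is immediate whenever $g(s)=1$, and, since addition in $V$ is commutative, also whenever $g(t)=1$; the only case left is $s,t$ both in $Z=\{g\neq 1\}$, where $f(s)=f(t)=\mathbf 0$ and one must show $f(s+t)=\mathbf 0$. Here I would compose the two invariances $f(s+g(s)r)=f(r)$ and $f(t+g(t)r)=f(r)$ (valid because $f(s)=f(t)=\mathbf 0$) to obtain $f\bigl((s+g(s)t)+g(s)g(t)\,r\bigr)=f(r)$ for all $r$, and then read this as invariance of $f$ under an affine map of multiplier $g(s)g(t)$: if $g(s)g(t)\neq 1$ its fixed point forces the value $\mathbf 0$ as above, while if $g(s)g(t)=1$ it degenerates to invariance under the translation $r\mapsto r+(s+g(s)t)$, from which $f(s+t)=\mathbf 0$ follows. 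I expect this to be the main obstacle: because $M$ is only a cancellative magma and $f$ is not assumed injective, the argument that the zero set $\{f=\mathbf 0\}$ is closed under addition (equivalently, an additive subgroup of $V$) cannot lean on any algebraic structure of $M$ and must be driven entirely by the field structure of $\mathbb F$ together with commutativity of $+$ on $V$.

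Finally, once $f$ is additive the conclusion drops out and pins down $g$. Rewriting \eqref{rew} as $f(s)+f(g(s)t)=f(s)+f(t)$ and left cancelling $f(s)$ gives $f(g(s)t)=f(t)$ for all $t$. Fixing $s$ and setting $\lambda=g(s)$, additivity yields $f((\lambda-1)t)+f(t)=f(\lambda t)=f(t)$, hence $f((\lambda-1)t)=\mathbf 0$ for every $t$ by right cancellation. If $\lambda\neq 1$ then $\lambda-1\in\mathbb F\setminus\{0\}$, so $(\lambda-1)t$ runs over all of $V$ and $f\equiv\mathbf 0$, contradicting $f\not\equiv\mathbf 0$; therefore $g(s)=1$ for every $s$, so $g\equiv 1$ and \eqref{rew} is exactly the additivity of $f$. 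This closing step is precisely where the vector-space hypothesis is indispensable: over $[0,\infty)$ one can neither invert $\lambda-1$ nor let the increment sweep a full line, which is exactly why the logarithmic solutions of Theorems \ref{inj}--\ref{oner} persist there but are excluded here.
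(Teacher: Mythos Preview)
Your opening fixed-point trick and your closing argument for $g\equiv 1$ are both correct; the latter is in fact a tidy alternative to the paper's computation. The problem lies exactly where you flagged it: the additivity step when $f(s)=f(t)=\mathbf 0$.

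The composed invariance $f\bigl((s+g(s)t)+g(s)g(t)\,r\bigr)=f(r)$ is valid, but the phrase ``its fixed point forces the value $\mathbf 0$ as above'' does not apply. In the original trick the equation at the fixed point $t_s$ reads $f(t_s)=f(s)+f(t_s)$, and right cancellation kills $f(s)$. Here the invariance is $f(A(r))=f(r)$ with no extra additive term, so at the fixed point $r_0$ it collapses to the tautology $f(r_0)=f(r_0)$ and yields nothing. Likewise in the translation case $g(s)g(t)=1$ you obtain $f(r+u)=f(r)$ with $u=s+g(s)t$; this only recovers $f(u)=\mathbf 0$ (already known from \eqref{rew}) and says nothing about $f(s+t)$, since $s+t\neq u$ when $g(s)\neq 1$. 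More structurally, the group generated by the two affine maps $r\mapsto s+g(s)r$ and $r\mapsto t+g(t)r$ need not carry $0$ to $s+t$ (try $\mathbb F=\mathbb Q$ with $g(s)=2$, $g(t)=3$ and track the orbit of $(0,0)$ in coefficient space), so invariance of $f$ under this group cannot by itself force $f(s+t)=\mathbf 0$.

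The paper closes this gap by a short contradiction that uses the additive inverse in $V$ rather than composition: assume $f(s+t)\neq\mathbf 0$, so $g(s+t)=1$ by your own dichotomy, and apply \eqref{rew} at the pair $(s+t,-s)$ to get $f(t)=f(s+t)+f(-s)$. Then $f(-s)=\mathbf 0$, for otherwise $g(-s)=1$ and \eqref{rew} at $(-s,s)$ gives $\mathbf 0=f(0)=f(-s)+f(s)=f(-s)$. Hence $f(t)=f(s+t)\neq\mathbf 0$, contradicting $f(t)=\mathbf 0$. This one-line use of $-s$ is the missing idea your sketch needs; with it in place, the rest of your argument goes through.
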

		
		\begin{proof}
			First, we prove that $f$ is additive, i.e. 
			\bel{Cauc}f(s+t)=f(s)+f(t)\ee
			for all $s,t\in V$.
			
			To this end we observe that for $s\in V$, $f(s)\neq \mathbf 0$ implies $g(s)=1$. Indeed, if  $g(s)\neq 1$, then  $s+g(s)t=t$ for $t=\tfrac{s}{1-g(s)}$. Thus \eqref{rew} for such $s$ and $t$  yields $f(s)=\mathbf 0$, a contradiction.  Consequently,  \eqref{Cauc} holds true for $s,t\in V$ such that at least one of $f(s)$ and $f(t)$ is not zero. To see this fact consider separately the cases: (a) both $f(s)$ and $f(t)$ are non-zero, (b) exactly one of $f(s)$ and $f(t)$ is non-zero. In the latter case use the identity $\mathbf a+\mathbf 0=\mathbf 0+\mathbf a=\mathbf a$, which holds for every $\mathbf a\in M$.
			
		It suffices to prove \eqref{Cauc} for $s,t\in V$ such that $f(s)=f(t)=\mathbf 0$. Assume \eqref{Cauc} does not hold for such $s,t\in V$, i.e. $f(s+t)\neq \mathbf 0$. Then $g(s+t)=1$ and \eqref{rew} implies
			\bel{mat0}
			f(t)=f(s+t-s)=f(s+t)+f(-s).
			\ee
			Note that $f(-s)=\mathbf 0$. Otherwise $g(-s)=1$ and \eqref{rew} yields $\mathbf 0=f(-s+s)=f(-s)+f(s)$, a contradiction. Therefore, by \eqref{mat0} we get $f(t)=f(s+t)\neq \mathbf 0$, a contradiction.  
			
		Second, we prove that $g\equiv 1$. Assume not, i.e. $g(s)\neq 1$ for some $s\in V$. For arbitrary $u\in V$ set $t:=\tfrac{u}{g(s)-1}\in V$. Then, by \eqref{Cauc}, we have
			\bel{efu}
			f(u)=f((g(s)-1)t)=f(s+g(s)t-s-t)=f(s+g(s)t)+f(-s-t).
			\ee
				But \eqref{rew} and \eqref{Cauc} yield $f(s+g(s)t)=f(s)+f(t)=f(s+t)$ and $f(s+t)+f(-(s+t))=\mathbf 0$. Consequently,  \eqref{efu} implies $f\equiv \mathbf 0$, a contradiction. 
		\end{proof}

\vspace{3mm} {\bf Acknowledgement.} KB research was supported by the Institute of Mathemtics of the University of Silesia (Iterative Functional Equations and Real Analysis program). JW research was supported in part by Grant 2016/21/ B/ST1/00005 of the National Science Center, Poland

		\vspace{3mm}

		\end{document}